\documentclass{article}
\usepackage{amssymb}
\usepackage{amsfonts}
\usepackage{amsmath}

\setcounter{MaxMatrixCols}{10}

\newtheorem{theorem}{Theorem}

\newtheorem{definition}[theorem]{Definition}

\newtheorem{notation}[theorem]{Notation}

\newtheorem{remark}[theorem]{Remark}

\newenvironment{proof}[1][Proof]{\noindent\textbf{#1.} }{\ \rule{0.5em}{0.5em}}
\input{tcilatex}
\begin{document}

\title{Homogenization of the heat equation with a vanishing volumetric heat
capacity}
\author{Tatiana Danielsson, Pernilla Johnsen}
\maketitle

\begin{abstract}
This paper is devoted to the homogenization of the heat conduction equation,
with a homogeneous Dirichlet boundary condition, having a periodically
oscillating thermal conductivity and a vanishing volumetric heat capacity. A
homogenization result is established by using the evolution settings of
multiscale and very weak multiscale convergence. In particular, we
investigate how the relation between the volumetric heat capacity and the
microscopic structure effects the homogenized problem and its associated
local problem. It turns out that the properties of the microscopic geometry
of the problem give rise to certain special effects in the homogenization
result.
\end{abstract}

\section{Introduction\label{Introduction}}

By means of periodic homogenization, we study the heat conduction equation
with homogeneous Dirichlet boundary condition. Homogenization is a technique
for mathematically investigating heterogeneous materials like e.g. composite
materials and porous media. Thinking of the material contained in the domain
as having periodically distributed heterogeneities where the period depends
on a parameter $\varepsilon $, we study the limit process as $\varepsilon $
tends to zero.

We study the linear parabolic equation%
\begin{eqnarray}
\varepsilon ^{q}\partial _{t}u_{\varepsilon }\left( x,t\right) -\nabla \cdot
\left( a\left( \frac{x}{\varepsilon },\frac{t}{\varepsilon ^{r}}\right)
\nabla u_{\varepsilon }\left( x,t\right) \right) &=&f(x,t)\text{ in }\Omega
_{T}\text{,}  \notag \\
u_{\varepsilon }\left( x,0\right) &=&u_{0}\left( x\right) \text{ in }\Omega 
\text{,}  \label{ekvation} \\
u_{\varepsilon }\left( x,t\right) &=&0\text{ on }\partial \Omega \times
\left( 0,T\right) \text{,}  \notag
\end{eqnarray}%
where $0<q<r$ are real numbers, $f\in L^{2}(\Omega _{T})$ and $u_{0}\in
L^{2}(\Omega )$. We denote the domain by $\Omega _{T}=\Omega \times \left(
0,T\right) $, where $\Omega \subset 
\mathbb{R}
^{N}$ is open and bounded with smooth boundary $\partial \Omega $ and $%
\left( 0,T\right) \subset 
\mathbb{R}
$ is an open bounded interval. Here, the thermal conductivity is
characterized by the function $a$, which is periodic with respect to the
unit cube $Y$ in $%
\mathbb{R}
^{N}$ in its first variable and with respect to the unit interval $S$ in $%
\mathbb{R}
$ in its second. The coefficient $\varepsilon ^{q}$ in front of the time
derivative represents the volumetric heat capacity. A more detailed
description of the equation will be given in Section \ref{Homogenization}.

As $\varepsilon $ tends to zero, we search for a weak limit $u$ to the
sequence of solutions $\left\{ u_{\varepsilon }\right\} $, where $u$ is the
solution to a so-called homogenized problem, which is characterized by a
local problem. It turns out that equation (\ref{ekvation}) has two special
features. The first one is that the homogenized problem is elliptic, for all 
$0<q<r$, even though the original problem is parabolic. The second feature
is that we have what we refer to as resonance, i.e. a parabolic local
problem, for a different matching between the microscopic scales than the
usual one. In \cite{BLP} it was shown that parabolic equations usually have
resonance if the temporal scale is the square of the spatial one. Several
other studies of parabolic equations, both equations where the coefficient
in front of the time derivative is identical to one and equations with
oscillating coefficients, show resonance for the same type of matching, see
e.g. \cite{Hol}, \cite{NgWo}, \cite{AlPi}, \cite{FHOLstrange-term}, \cite%
{FHOLPmismatch}, \cite{SvWo}, \cite{FHOLParbitrary} and \cite{DoWo}. As we
will see in the homogenization result, equation (\ref{ekvation}) will have
resonance if $r=q+2$, i.e. the matching that gives a parabolic local problem
is not when the temporal scale is equal to the spatial one.

In the homogenization procedure we use evolution settings of multiscale and
very weak multiscale convergence. A gradient characterization and a very
weak multiscale convergence compactness result for sequences bounded in $%
W^{1,2}(0,T;$\newline
$H_{0}^{1}(\Omega ),L^{2}(\Omega ))$, meaning that $\left\{ u_{\varepsilon
}\right\} $ is bounded in $L^{2}(0,T;H_{0}^{1}(\Omega ))$ and $\left\{
\partial _{t}u_{\varepsilon }\right\} $ is bounded in $L^{2}(0,T;H^{-1}(%
\Omega ))$, can be found in e.g. \cite{FHOLParbitrary}. Here, we use a
different approach where the boundedness of the time derivative is replaced
by a certain condition, see (\ref{villkor}) in Theorem \ref%
{Theorem-gradsplit}. This approach was, up to the authors' knowledge, first
used in \cite{Lob} where compactness results for sequences defined on
perforated domains were given. The corresponding compactness results for
sequences defined on non-perforated domains, which we present in Theorem \ref%
{Theorem-gradsplit} and Theorem \ref{Theorem-very weak}, are stated and
proven in \cite{JoLo}. The present paper is a further development of the
work in \cite{JoLo}, where a homogenization result for equation (\ref%
{ekvation}) for the case when $q=1$ and $r=3$ was established.

\begin{notation}
We use the notation $\mathcal{Y}_{n,m}=Y^{n}\times S^{m}$ with $%
Y^{n}=Y_{1}\times Y_{2}\times \cdots \times Y_{n}$ and $S^{m}=S_{1}\times
S_{2}\times \cdots \times S_{m}$, where $Y_{1}=Y_{2}=\ldots =Y_{n}=Y=\left(
0,1\right) ^{N}$ and $S_{1}=S_{2}=\ldots =S_{m}=S=\left( 0,1\right) $.
Further, we let $y^{n}=y_{1},y_{2},\ldots ,y_{n}$, $dy^{n}=dy_{1}dy_{2}%
\cdots dy_{n}$, $s^{m}=s_{1},s_{2},\ldots ,s_{m}$ and $ds^{m}=ds_{1}ds_{2}%
\cdots ds_{m}$. Moreover, we denote by $W^{1,2}(0,T;H_{0}^{1}(\Omega
),L^{2}(\Omega ))$ the space of all functions $u\in
L^{2}(0,T;H_{0}^{1}(\Omega ))$ such that $\partial _{t}u\in
L^{2}(0,T;H^{-1}(\Omega ))$. The subscript $_{\sharp }$ is used on function
spaces to denote periodicity of the functions involved over the domain in
question. Lastly, for $k=1,\ldots ,n$ and $j=1,\ldots ,m$, the scale
functions $\varepsilon _{k}(\varepsilon )$ and $\varepsilon _{j}^{\prime
}(\varepsilon )$ are strictly positive and tend to zero as $\varepsilon $
does and we denote lists of spatial and temporal scales by $\left\{
\varepsilon _{1},\ldots ,\varepsilon _{n}\right\} $ and $\left\{ \varepsilon
_{1}^{\prime },\ldots ,\varepsilon _{m}^{\prime }\right\} $, respectively.
\end{notation}

\section{Preliminaries\label{Preliminaries}}

Our main tools, evolution multiscale and very weak evolution multiscale
convergence, are generalizations and modifications of the classical
two-scale convergence. A sequence $\left\{ u_{\varepsilon }\right\} $ in $%
L^{2}(\Omega )$ is said to two-scale converge to $u_{0}\in L^{2}(\Omega
\times Y)$ if 
\begin{equation*}
\underset{\varepsilon \rightarrow 0}{\lim }\int_{\Omega }u_{\varepsilon
}\left( x\right) v\left( x,\frac{x}{\varepsilon }\right) dx=\int_{\Omega
}\int_{Y}u_{0}\left( x,y\right) v\left( x,y\right) dydx
\end{equation*}%
for all $v\in L^{2}(\Omega ;C_{\sharp }(Y))$.

Two-scale convergence was introduced by Nguetseng in \cite{Ngu1} and \cite%
{Ngu2}, where he uses the concept to homogenize a linear elliptic problem
with one microscopic spatial scale. In \cite{Al1}, Allaire gives a
compactness result for a different class of test functions and applies the
concept to e.g. nonlinear elliptic problems and problems on perforated
domains.\ The generalization of two-scale convergence to sequences with
multiple microscopic scales in space was provided by Allaire and Briane in 
\cite{AlBr}, where they give the definition and a compactness result for the
concept. Following \cite{AlBr}, we say that a sequence $\left\{
u_{\varepsilon }\right\} $ $\left( n+1\right) $-scale converges to $u_{0}\in
L^{2}(\Omega \times Y^{n})$ if%
\begin{equation*}
\underset{\varepsilon \rightarrow 0}{\lim }\int_{\Omega }u_{\varepsilon
}\left( x\right) v\left( x,\frac{x}{\varepsilon _{1}},\cdots ,\frac{x}{%
\varepsilon _{n}}\right) dx=\int_{\Omega }\int_{Y^{n}}u_{0}\left(
x,y^{n}\right) v\left( x,y^{n}\right) dy^{n}dx
\end{equation*}%
for all $v\in L^{2}(\Omega ;C_{\sharp }(Y^{n}))$.

In \cite{PerLic} (see also the appendix of \cite{FHOLParbitrary}),
compactness results were given for an arbitrary number of scales in both
space and time, extending the concept of multiscale convergence to an
analogous evolution setting.

\begin{definition}[Evolution multiscale convergence]
A sequence $\left\{ u_{\varepsilon }\right\} $ in\newline
$L^{2}(\Omega _{T})$ is said to $\left( n+1,m+1\right) $-scale converge to $%
u_{0}\in L^{2}(\Omega _{T}\times \mathcal{Y}_{n,m})$ if%
\begin{gather*}
\lim_{\varepsilon \rightarrow 0}\int_{\Omega _{T}}u_{\varepsilon }\left(
x,t\right) v\left( x,t,\frac{x}{\varepsilon _{1}},\cdots ,\frac{x}{%
\varepsilon _{n}},\frac{t}{\varepsilon _{1}^{\prime }},\cdots ,\frac{t}{%
\varepsilon _{m}^{\prime }}\right) dxdt \\
=\int_{\Omega _{T}}\int_{\mathcal{Y}_{n,m}}u_{0}\left(
x,t,y^{n},s^{m}\right) v\left( x,t,y^{n},s^{m}\right) dy^{n}ds^{m}dxdt
\end{gather*}%
for all $v\in L^{2}(\Omega _{T};C_{\sharp }(\mathcal{Y}_{n,m})).$ This is
denoted by 
\begin{equation*}
u_{\varepsilon }\left( x,t\right) \overset{n+1,m+1}{\rightharpoonup }%
u_{0}\left( x,t,y^{n},s^{m}\right) \text{.}
\end{equation*}
\end{definition}

Before we proceed with the compactness results we make some additional
assumptions on the microscopic scales. The scales in a list are said to be
separated if%
\begin{equation*}
\lim_{\varepsilon \rightarrow 0}\frac{\varepsilon _{k+1}}{\varepsilon _{k}}=0
\end{equation*}%
and well-separated if there exists a positive integer $\ell $ such that%
\begin{equation*}
\lim_{\varepsilon \rightarrow 0}\frac{1}{\varepsilon _{k}}\left( \frac{%
\varepsilon _{k+1}}{\varepsilon _{k}}\right) ^{\ell }=0\text{,}
\end{equation*}%
where $k=1,\ldots ,n-1$. When having two lists of microscopic scales, one
spatial and one temporal, we have the following generalization of
separatedness and well-separatedness, the concept of jointly
(well-)separatedness. The definition was first given by Persson, see e.g. 
\cite{PerMonotone} where a more technically formulated version is given.

\begin{definition}[Jointly (well-)separated scales]
Let $\left\{ \varepsilon _{1},\ldots ,\varepsilon _{n}\right\} $ and $%
\left\{ \varepsilon _{1}^{\prime },\ldots ,\varepsilon _{m}^{\prime
}\right\} $ be lists of (well-)separated scales. Collect all elements from
both lists in one common list. If from possible duplicates, where by
duplicates we mean scales which tend to zero equally fast, one member of
each such pair is removed and the list in order of magnitude of all the
remaining elements is (well-)separated, the lists $\left\{ \varepsilon
_{1},\ldots ,\varepsilon _{n}\right\} $ and $\left\{ \varepsilon
_{1}^{\prime },\ldots ,\varepsilon _{m}^{\prime }\right\} $ are said to be
jointly (well-)separated.
\end{definition}

A compactness result for $\left( n+1,m+1\right) $-scale convergence reads as
follows.

\begin{theorem}
\label{Theorem-begr ger multiskala}Let $\left\{ u_{\varepsilon }\right\} $
be a bounded sequence in $L^{2}(\Omega _{T})$ and suppose that the lists $%
\left\{ \varepsilon _{1},\ldots ,\varepsilon _{n}\right\} $ and $\left\{
\varepsilon _{1}^{\prime },\ldots ,\varepsilon _{m}^{\prime }\right\} $ are
jointly separated. Then, up to a subsequence,%
\begin{equation*}
u_{\varepsilon }\left( x,t\right) \overset{n+1,m+1}{\rightharpoonup }%
u_{0}\left( x,t,y^{n},s^{m}\right) \text{,}
\end{equation*}%
where $u_{0}\in L^{2}(\Omega _{T}\times \mathcal{Y}_{n,m})$.
\end{theorem}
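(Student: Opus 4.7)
The plan is to follow the standard Nguetseng/Allaire-Briane route adapted to the evolution setting: define a family of linear functionals on the space of admissible test functions, use a mean value (averaging) lemma to bound them uniformly, extract a subsequential weak-$*$ limit by a density/diagonal argument, and then identify this limit with an element of $L^{2}(\Omega_{T}\times \mathcal{Y}_{n,m})$ via Riesz representation.

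More concretely, first I would introduce, for each $\varepsilon$ and each $v\in L^{2}(\Omega_{T};C_{\sharp}(\mathcal{Y}_{n,m}))$, the functional
\[
T_{\varepsilon}(v)=\int_{\Omega_{T}}u_{\varepsilon}(x,t)\,v\!\left(x,t,\tfrac{x}{\varepsilon_{1}},\ldots,\tfrac{x}{\varepsilon_{n}},\tfrac{t}{\varepsilon_{1}^{\prime}},\ldots,\tfrac{t}{\varepsilon_{m}^{\prime}}\right)dx\,dt.
\]
The Cauchy--Schwarz inequality gives $|T_{\varepsilon}(v)|\leq \|u_{\varepsilon}\|_{L^{2}(\Omega_{T})}\,\|v(\cdot,\cdot,\tfrac{\cdot}{\varepsilon_{1}},\ldots,\tfrac{\cdot}{\varepsilon_{m}^{\prime}})\|_{L^{2}(\Omega_{T})}$, and the first factor is uniformly bounded by hypothesis. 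To control the second factor one needs the key averaging fact: for any $w\in L^{1}(\Omega_{T};C_{\sharp}(\mathcal{Y}_{n,m}))$,
\[
\lim_{\varepsilon\to 0}\int_{\Omega_{T}}w\!\left(x,t,\tfrac{x}{\varepsilon_{1}},\ldots,\tfrac{x}{\varepsilon_{n}},\tfrac{t}{\varepsilon_{1}^{\prime}},\ldots,\tfrac{t}{\varepsilon_{m}^{\prime}}\right)dx\,dt=\int_{\Omega_{T}}\int_{\mathcal{Y}_{n,m}}w\,dy^{n}ds^{m}dx\,dt.
\]
Applied to $w=|v|^{2}$ this yields $\limsup_{\varepsilon}|T_{\varepsilon}(v)|\leq C\,\|v\|_{L^{2}(\Omega_{T}\times\mathcal{Y}_{n,m})}$. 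A countable dense subset of $L^{2}(\Omega_{T};C_{\sharp}(\mathcal{Y}_{n,m}))$ (for instance, tensor products of smooth compactly supported functions and trigonometric polynomials in each $y_{k}$ and $s_{j}$) together with a Cantor diagonal extraction produces a subsequence along which $T_{\varepsilon}(v)$ converges for every $v$ in the dense subset; the limit extends by density and continuity to a bounded linear functional on all of $L^{2}(\Omega_{T};C_{\sharp}(\mathcal{Y}_{n,m}))$, which itself is dense in the Hilbert space $L^{2}(\Omega_{T}\times\mathcal{Y}_{n,m})$. Riesz representation then delivers the desired $u_{0}$.

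The main obstacle is the averaging identity above, since all joint-separatedness assumptions are consumed there. The argument is to merge the two lists into one ordered list of scales and reduce to the purely multiscale case of Allaire--Briane. When no two scales of the merged list tend to zero equally fast, joint separatedness says the merged list is itself separated, and the classical iterated mean value lemma applies after reducing to tensor test functions by density. When a spatial scale $\varepsilon_{k}$ and a temporal scale $\varepsilon_{j}^{\prime}$ coincide asymptotically, the duplicate has to be handled carefully: because $x$ and $t$ are independent variables, the two oscillations decouple in the sense of Fubini, so one can average one of them out first on a tensor test function and remove its representative from the list, then invoke the separated-case result on what remains. All other steps---the bound, the diagonal extraction, and the Riesz step---are routine once this averaging property is established.
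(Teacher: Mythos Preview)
The paper does not actually prove this theorem: its entire proof is a citation to Theorem~A.1 in \cite{FHOLParbitrary}. Your proposal, by contrast, sketches the standard compactness argument in the Nguetseng/Allaire--Briane tradition, which is precisely what that cited reference carries out. In that sense the approaches coincide; you have simply written out what the authors delegated to a reference.

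Your outline is correct. One small remark on the averaging lemma: your discussion of the merged list is slightly roundabout. After reducing by density to tensor test functions of the form $v_{1}(x)c_{1}(t)\phi(y^{n})\psi(s^{m})$, the $x$- and $t$-integrals in the mean value computation decouple by Fubini, so it suffices that the spatial list and the temporal list each be separated \emph{individually}, which joint separatedness certainly guarantees. There is then no need to run an iterated mean value argument on the merged list itself (where the scales oscillate in different carrier variables and the spatial Allaire--Briane lemma does not apply verbatim), and the duplicate case requires no separate treatment. The remaining steps---the uniform bound on $T_{\varepsilon}$, diagonal extraction on a countable dense set, and Riesz representation on $L^{2}(\Omega_{T}\times\mathcal{Y}_{n,m})$---are routine and go through as you describe.
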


\begin{proof}
See Theorem A.1 in \cite{FHOLParbitrary}.
\end{proof}

The following gradient characterization, which is adapted to our problem,
will be important in the homogenization of (\ref{ekvation}).

\begin{theorem}
\label{Theorem-gradsplit}Assume that $\left\{ u_{\varepsilon }\right\} $ is
bounded in $L^{2}(0,T;H_{0}^{1}(\Omega ))$ and, for any $v_{1}\in D(\Omega )$%
, $c_{1}\in D(0,T)$, $c_{2}\in C_{\sharp }^{\infty }(S)$ and $r>0$,%
\begin{equation}
\lim_{\varepsilon \rightarrow 0}\int_{\Omega _{T}}u_{\varepsilon }\left(
x,t\right) v_{1}\left( x\right) \partial _{t}\left( \varepsilon
^{r}c_{1}\left( t\right) c_{2}\left( \frac{t}{\varepsilon ^{r}}\right)
\right) dxdt=0\text{.}  \label{villkor}
\end{equation}%
Then, for $n=m=1$ with $\varepsilon _{1}=\varepsilon $ and $\varepsilon
_{1}^{\prime }=\varepsilon ^{r}$, up to a subsequence,%
\begin{equation*}
u_{\varepsilon }\left( x,t\right) \rightharpoonup u\left( x,t\right) \text{
in }L^{2}(0,T;H_{0}^{1}(\Omega ))
\end{equation*}%
and%
\begin{equation}
\nabla u_{\varepsilon }\left( x,t\right) \overset{2,2}{\rightharpoonup }%
\nabla u\left( x,t\right) +\nabla _{y}u_{1}\left( x,t,y,s\right) \text{,}
\label{gradsplit}
\end{equation}%
where $u\in L^{2}(0,T;H_{0}^{1}(\Omega ))$ and $u_{1}\in L^{2}(\Omega
_{T}\times S;H_{\sharp }^{1}(Y))$.
\end{theorem}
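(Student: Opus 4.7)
The plan is to extract appropriate subsequences, identify the $(2,2)$-scale limits of $u_\varepsilon$ and $\nabla u_\varepsilon$, and then use integration-by-parts arguments against carefully chosen test functions to pin down their structure.

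First, by reflexivity and the $L^{2}(0,T;H_{0}^{1}(\Omega))$-boundedness, pass to a subsequence with $u_{\varepsilon}\rightharpoonup u$ in $L^{2}(0,T;H_{0}^{1}(\Omega))$. The sequences $\{u_{\varepsilon}\}$ and $\{\nabla u_{\varepsilon}\}$ are then bounded in $L^{2}(\Omega_{T})$, and since $r>0$ guarantees that $\varepsilon_{1}=\varepsilon$ and $\varepsilon_{1}^{\prime}=\varepsilon^{r}$ are jointly separated, Theorem \ref{Theorem-begr ger multiskala} allows a further extraction so that $u_{\varepsilon}\overset{2,2}{\rightharpoonup}u_{0}$ and $\nabla u_{\varepsilon}\overset{2,2}{\rightharpoonup}w$ for some $u_{0}\in L^{2}(\Omega_{T}\times Y\times S)$ and $w\in L^{2}(\Omega_{T}\times Y\times S)^{N}$.

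The next task is to show $u_{0}(x,t,y,s)=u(x,t)$. The $y$-independence is standard: for $\mathbf{c}\in C_{c}^{\infty}(\Omega_{T};C_{\sharp}^{\infty}(Y\times S))^{N}$, use the identity $\nabla_{y}\cdot\mathbf{c}(\cdots)=\varepsilon\nabla_{x}\cdot[\mathbf{c}(x,t,x/\varepsilon,t/\varepsilon^{r})]-\varepsilon(\nabla_{x}\cdot\mathbf{c})(\cdots)$, multiply by $u_{\varepsilon}$, integrate by parts (using $u_{\varepsilon}\in H_{0}^{1}(\Omega)$), and observe both resulting terms carry a prefactor $\varepsilon$ against bounded quantities; the limit yields $\nabla_{y}u_{0}=0$. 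The $s$-independence is where condition (\ref{villkor}) intervenes: expanding $\partial_{t}[\varepsilon^{r}c_{1}(t)c_{2}(t/\varepsilon^{r})]=\varepsilon^{r}c_{1}^{\prime}(t)c_{2}(t/\varepsilon^{r})+c_{1}(t)c_{2}^{\prime}(t/\varepsilon^{r})$, the first piece is $O(\varepsilon^{r})$ against a bounded integral, while passing to the $(2,2)$-scale limit in the second yields
\begin{equation*}
\int_{\Omega_{T}}\int_{S}u_{0}(x,t,s)v_{1}(x)c_{1}(t)c_{2}^{\prime}(s)\,ds\,dx\,dt=0,
\end{equation*}
forcing $\partial_{s}u_{0}=0$. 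Combined with the weak $L^{2}$ convergence $u_{\varepsilon}\rightharpoonup u$, obtained by testing against $(y,s)$-independent functions, this identifies $u_{0}=u$.

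For the corrector structure, test $\nabla u_{\varepsilon}$ against $\phi(x,t)c(t/\varepsilon^{r})\psi(x/\varepsilon)$ where $\phi\in C_{c}^{\infty}(\Omega_{T})$, $c\in C_{\sharp}^{\infty}(S)$, and $\psi\in C_{\sharp}^{\infty}(Y)^{N}$ is divergence-free. Integrating by parts in $x$, the would-be $\varepsilon^{-1}(\nabla_{y}\cdot\psi)(x/\varepsilon)$ contribution is killed by the divergence-free hypothesis; passing to the limit via the $(2,2)$-scale convergences of $u_{\varepsilon}$ and $\nabla u_{\varepsilon}$ and then integrating by parts in $x$ on the limit side produces
\begin{equation*}
\int_{\Omega_{T}}\int_{Y}\int_{S}\bigl(w(x,t,y,s)-\nabla u(x,t)\bigr)\cdot\psi(y)\phi(x,t)c(s)\,dy\,ds\,dx\,dt=0.
\end{equation*}
Testing also against $y$-independent vector fields gives $\int_{Y}(w-\nabla u)\,dy=0$. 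The orthogonal decomposition of $L_{\sharp}^{2}(Y)^{N}$ into gradients and divergence-free fields then yields, for almost every $(x,t,s)$, a function $u_{1}(x,t,\cdot,s)\in H_{\sharp}^{1}(Y)$ (normalized to have zero mean in $y$) with $w-\nabla u=\nabla_{y}u_{1}$, and measurability plus the $L^{2}$ estimate place $u_{1}$ in $L^{2}(\Omega_{T}\times S;H_{\sharp}^{1}(Y))$.

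The main obstacle is the $s$-independence of $u_{0}$. In the classical $W^{1,2}(0,T;H_{0}^{1}(\Omega),L^{2}(\Omega))$ setting this follows from the bound on $\partial_{t}u_{\varepsilon}$ via Aubin--Lions compactness, but that bound is unavailable here. Condition (\ref{villkor}) is engineered precisely as its substitute: the test function $\varepsilon^{r}c_{1}(t)c_{2}(t/\varepsilon^{r})$ and the $\partial_{t}$ acting on it are chosen so that, after the trivial $\varepsilon^{r}$-small piece is discarded, only the $c_{2}^{\prime}(t/\varepsilon^{r})$-contribution survives, which is exactly what is needed to read off $\partial_{s}u_{0}=0$ from the $(2,2)$-scale limit.
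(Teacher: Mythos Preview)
The paper does not actually prove this theorem; its entire proof reads ``See Theorem 2.7 in \cite{JoLo}.'' Your proposal supplies a complete and correct argument along the standard lines for gradient characterizations in evolution two-scale convergence: extract weak and $(2,2)$-scale limits by compactness, remove the $y$-dependence of $u_{0}$ via the usual $\varepsilon$-scaling integration-by-parts trick, remove the $s$-dependence using condition~(\ref{villkor}) (which is exactly the purpose of that hypothesis, replacing the unavailable bound on $\partial_{t}u_{\varepsilon}$), and then recover the corrector $u_{1}$ through the periodic Helmholtz decomposition after testing against divergence-free cell fields. This is almost certainly the argument carried out in \cite{JoLo}, so your route should coincide with the one the paper defers to.
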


\begin{proof}
See Theorem 2.7 in \cite{JoLo}.
\end{proof}

Evolution multiscale convergence solely is not enough to handle certain
sequences appearing in the homogenization of (\ref{ekvation}). Therefore, we
introduce very weak evolution multiscale convergence. This type of
convergence originates from \cite{Hol}, where it is used to obtain
homogenization and corrector results for linear parabolic problems with one
microscopic scale in space and time respectively. In \cite{NgWo}, further
progress in the context of $\Sigma $-convergence led to a closely related
result and a simplification for the applicability in the homogenization
procedure. The present form of the concept was given for an arbitrary number
of spatial scales in \cite{FHOPvery-weak}, where also the name "very weak
multiscale convergence" was established. Following \cite{PerPhD} and \cite%
{FHOLParbitrary}, we give the evolution version of very weak multiscale
convergence including arbitrarily many spatial and temporal scales.

\begin{definition}[Very weak evolution multiscale convergence]
A sequence $\left\{ w_{\varepsilon }\right\} $ in $L^{1}(\Omega _{T})$ is
said to $(n+1,m+1)$-scale converge very weakly to $w_{0}\in L^{1}(\Omega
_{T}\times \mathcal{Y}_{n,m})$ if%
\begin{gather*}
\lim_{\varepsilon \rightarrow 0}\int_{\Omega _{T}}w_{\varepsilon }\left(
x,t\right) v_{1}\left( x,\frac{x}{\varepsilon _{1}},\ldots ,\frac{x}{%
\varepsilon _{n-1}}\right) v_{2}\left( \frac{x}{\varepsilon _{n}}\right)
c\left( t,\frac{t}{\varepsilon _{1}^{\prime }},\ldots ,\frac{t}{\varepsilon
_{m}^{\prime }}\right) dxdt \\
=\int_{\Omega _{T}}\int_{\mathcal{Y}_{n,m}}w_{0}\left(
x,t,y^{n},s^{m}\right) v_{1}(x,y^{n-1})v_{2}\left( y_{n}\right)
c(t,s^{m})dy^{n}ds^{m}dxdt
\end{gather*}%
for any $v_{1}\in D(\Omega ;C_{\sharp }^{\infty }(Y^{n-1})),$ $v_{2}\in
C_{\sharp }^{\infty }(Y_{n})/%
\mathbb{R}
$ and $c\in D(0,T;C_{\sharp }^{\infty }(S^{m}))$, where 
\begin{equation}
\int_{Y_{n}}w_{0}\left( x,t,y^{n},s^{m}\right) dy_{n}=0\text{.}
\label{very weak unik}
\end{equation}%
We write%
\begin{equation*}
w_{\varepsilon }\left( x,t\right) \underset{vw}{\overset{n+1,m+1}{%
\rightharpoonup }}w_{0}\left( x,t,y^{n},s^{m}\right) \text{.}
\end{equation*}
\end{definition}

\begin{remark}
Due to (\ref{very weak unik}) the limit is unique.
\end{remark}

We now give a compactness result for very weak evolution multiscale
convergence which will play a vital role, complementing Theorem \ref%
{Theorem-gradsplit}, in the homogenization of (\ref{ekvation}). Note that (%
\ref{villkor2}) in the theorem below is the same as (\ref{villkor}) in
Theorem \ref{Theorem-gradsplit}.

\begin{theorem}
\label{Theorem-very weak}Assume that $\left\{ u_{\varepsilon }\right\} $ is
bounded in $L^{2}(0,T;H_{0}^{1}(\Omega ))$ and, for any $v_{1}\in D(\Omega )$%
, $c_{1}\in D(0,T)$, $c_{2}\in C_{\sharp }^{\infty }(S)$ and $r>0$,%
\begin{equation}
\lim_{\varepsilon \rightarrow 0}\int_{\Omega _{T}}u_{\varepsilon }\left(
x,t\right) v_{1}\left( x\right) \partial _{t}\left( \varepsilon
^{r}c_{1}\left( t\right) c_{2}\left( \frac{t}{\varepsilon ^{r}}\right)
\right) dxdt=0\text{.}  \label{villkor2}
\end{equation}%
Then, for $n=m=1$ with $\varepsilon _{1}=\varepsilon $ and $\varepsilon
_{1}^{\prime }=\varepsilon ^{r}$, up to a subsequence,%
\begin{equation*}
\varepsilon ^{-1}u_{\varepsilon }\left( x,t\right) \overset{2,2}{\underset{vw%
}{\rightharpoonup }}u_{1}\left( x,t,y,s\right) \text{,}
\end{equation*}%
where $u_{1}\in L^{2}(\Omega _{T}\times S;H_{\sharp }^{1}(Y)/%
\mathbb{R}
)$ is the same as in (\ref{gradsplit}) in Theorem \ref{Theorem-gradsplit}.
\end{theorem}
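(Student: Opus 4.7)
The plan is to invoke Theorem~\ref{Theorem-gradsplit}---whose hypothesis coincides with \eqref{villkor2}---to extract a subsequence (still denoted $\varepsilon$) along which $u_\varepsilon\rightharpoonup u$ in $L^2(0,T;H_0^1(\Omega))$ and the gradient split \eqref{gradsplit} holds with some $u_1\in L^2(\Omega_T\times S;H_\sharp^1(Y))$. I claim that this same $u_1$, regarded modulo additive constants in $y$, is the very weak $(2,2)$-scale limit of $\varepsilon^{-1}u_\varepsilon$. It therefore suffices to verify, for every $v_1\in D(\Omega)$, $v_2\in C_\sharp^\infty(Y)/\mathbb{R}$ and $c\in D(0,T;C_\sharp^\infty(S))$, that
\begin{equation*}
I_\varepsilon := \int_{\Omega_T}\varepsilon^{-1}u_\varepsilon(x,t)\,v_1(x)\,v_2\!\left(\tfrac{x}{\varepsilon}\right)c\!\left(t,\tfrac{t}{\varepsilon^r}\right)dxdt \longrightarrow \int_{\Omega_T\times\mathcal{Y}_{1,1}}u_1 v_1 v_2 c\,dy\,ds\,dx\,dt.
\end{equation*}

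The device for absorbing the singular prefactor $\varepsilon^{-1}$ is an auxiliary Poisson problem on the flat torus. Since $\int_Y v_2\,dy=0$, elliptic regularity provides $w\in C_\sharp^\infty(Y)$ with $\Delta_y w=v_2$, so that
\begin{equation*}
\varepsilon^{-1}v_2\!\left(\tfrac{x}{\varepsilon}\right)=\operatorname{div}_x\!\left[\nabla_y w\!\left(\tfrac{x}{\varepsilon}\right)\right].
\end{equation*}
Substituting into $I_\varepsilon$ and integrating by parts in $x$---the boundary term vanishing because $u_\varepsilon\in H_0^1(\Omega)$---yields
\begin{equation*}
I_\varepsilon=-\int_{\Omega_T}\bigl(v_1(x)\nabla u_\varepsilon+u_\varepsilon\nabla v_1(x)\bigr)\cdot\nabla_y w\!\left(\tfrac{x}{\varepsilon}\right)c\!\left(t,\tfrac{t}{\varepsilon^r}\right)dxdt.
\end{equation*}
The singular factor has thus been traded for the two-scale quantity $\nabla u_\varepsilon$, whose asymptotic behavior is controlled by Theorem~\ref{Theorem-gradsplit}.

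Passing to the limit via $\nabla u_\varepsilon\overset{2,2}{\rightharpoonup}\nabla u+\nabla_y u_1$ and $u_\varepsilon\overset{2,2}{\rightharpoonup}u(x,t)$ (this latter convergence following from the $H_0^1$-boundedness together with \eqref{villkor2}, which rules out any $(y,s)$-dependence of the two-scale limit), and observing that $\int_Y\nabla_y w\,dy=0$ by periodicity eliminates both the $\nabla u$ and the $u\nabla v_1$ contributions, I am left with
\begin{equation*}
\lim_{\varepsilon\to 0}I_\varepsilon=-\int_{\Omega_T\times Y\times S}v_1(x)\,\nabla_y u_1\cdot\nabla_y w(y)\,c(t,s)\,dy\,ds\,dx\,dt.
\end{equation*}
A final integration by parts in $y$, combined with $\Delta_y w=v_2$ and the $Y$-periodicity of $u_1$, converts this into $\int u_1 v_1 v_2 c\,dy\,ds\,dx\,dt$, which is the desired identity; the mean-zero constraint \eqref{very weak unik} is automatic once $u_1$ is represented modulo additive constants in $y$. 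The main obstacle is precisely the Poisson trick: one has to recognize that the unbounded factor $\varepsilon^{-1}v_2(x/\varepsilon)$ can be rewritten as the $x$-divergence of a bounded oscillating field, so that a single integration by parts transfers it onto $\nabla u_\varepsilon$; after this rewriting the temporal scale $\varepsilon^r$ is carried along passively by the $(2,2)$-scale convergences and plays no active role.
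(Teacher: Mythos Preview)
Your argument is correct. The Poisson trick---solving $\Delta_y w=v_2$ on the torus so that $\varepsilon^{-1}v_2(x/\varepsilon)=\operatorname{div}_x\nabla_y w(x/\varepsilon)$, integrating by parts in $x$, and then passing to the limit via the gradient characterization---is the standard mechanism behind very weak multiscale compactness. The only step that deserves a word of care is your claim $u_\varepsilon\overset{2,2}{\rightharpoonup}u$: the $y$-independence of the two-scale limit follows from the $L^2(0,T;H_0^1(\Omega))$ bound as usual, while the $s$-independence is obtained by writing $c_1(t)\partial_s c_2(t/\varepsilon^r)=\partial_t\bigl(\varepsilon^r c_1 c_2(t/\varepsilon^r)\bigr)-\varepsilon^r\partial_t c_1\,c_2(t/\varepsilon^r)$ and invoking \eqref{villkor2}; you state this correctly but it is worth making the computation explicit.

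As to comparison with the paper: the paper does not actually give a proof of this theorem here---it merely refers to Theorem~2.10 in \cite{JoLo}. Your self-contained argument is in fact the approach used in that reference, so there is no discrepancy to report.
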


\begin{proof}
See Theorem 2.10 in \cite{JoLo}.
\end{proof}

\section{Homogenization\label{Homogenization}}

Let us now investigate problem (\ref{ekvation}), i.e. establish a
homogenization result for the equation%
\begin{eqnarray}
\varepsilon ^{q}\partial _{t}u_{\varepsilon }\left( x,t\right) -\nabla \cdot
\left( a\left( \frac{x}{\varepsilon },\frac{t}{\varepsilon ^{r}}\right)
\nabla u_{\varepsilon }\left( x,t\right) \right) &=&f(x,t)\text{ in }\Omega
_{T}\text{,}  \notag \\
u_{\varepsilon }\left( x,0\right) &=&u_{0}\left( x\right) \text{ in }\Omega 
\text{,}  \label{ekvation2} \\
u_{\varepsilon }\left( x,t\right) &=&0\text{ on }\partial \Omega \times
\left( 0,T\right) \text{,}  \notag
\end{eqnarray}%
where $0<q<r$, $f\in L^{2}(\Omega _{T})$ and $u_{0}\in L^{2}(\Omega )$. The
coefficient $a\in C_{\sharp }(\mathcal{Y}_{1,1})^{N\times N}$ satisfies the
coercivity condition%
\begin{equation}
a\left( y,s\right) \xi \cdot \xi \geq C_{0}\left\vert \xi \right\vert ^{2}
\label{koercivitet}
\end{equation}%
for a.e. $\left( y,s\right) \in \mathcal{Y}_{1,1}$, for every $\xi \in 
\mathbb{R}
^{N}$ and for some $C_{0}>0$. According to Section 23.7 in \cite{ZeiIIA} the
problem possesses a unique solution. The weak form of (\ref{ekvation2}) is%
\begin{gather}
\int_{\Omega _{T}}-\varepsilon ^{q}u_{\varepsilon }\left( x,t\right) v\left(
x\right) \partial _{t}c\left( t\right) +a\left( \frac{x}{\varepsilon },\frac{%
t}{\varepsilon ^{r}}\right) \nabla u_{\varepsilon }\left( x,t\right) \cdot
\nabla v\left( x\right) c\left( t\right) dxdt  \label{svag form} \\
=\int_{\Omega _{T}}f\left( x,t\right) v\left( x\right) c\left( t\right) dxdt%
\text{,}  \notag
\end{gather}%
for all $v\in H_{0}^{1}(\Omega )$ and $c\in D(0,T)$.

We will now show that the solution to (\ref{ekvation2}) is bounded in $%
L^{2}(0,T;H_{0}^{1}(\Omega ))$, i.e. it satisfies the a priori estimate%
\begin{equation}
\left\Vert u_{\varepsilon }\right\Vert _{L^{2}(0,T;H_{0}^{1}(\Omega ))}\leq C%
\text{,}  \label{a priori villkor}
\end{equation}%
where $C>0$ is a constant independent of $\varepsilon $. By Section 30.3 in 
\cite{ZeiIIB}, using $u_{\varepsilon }\in W^{1,2}(0,T;H_{0}^{1}(\Omega
),L^{2}(\Omega ))$ as a test function, the operator form of (\ref{ekvation2}%
) is%
\begin{gather*}
\int_{0}^{T}\varepsilon ^{q}\left\langle \partial _{t}u_{\varepsilon
},u_{\varepsilon }\right\rangle _{H^{-1}(\Omega ),H_{0}^{1}(\Omega
)}dt+\int_{\Omega _{T}}a\left( \frac{x}{\varepsilon },\frac{t}{\varepsilon
^{r}}\right) \nabla u_{\varepsilon }\left( x,t\right) \cdot \nabla
u_{\varepsilon }\left( x,t\right) dxdt \\
=\int_{\Omega _{T}}f\left( x,t\right) u_{\varepsilon }\left( x,t\right) dxdt%
\text{.}
\end{gather*}%
Multiplying by 2 and using the integration by parts formula (25) from
Section 23.6 in \cite{ZeiIIA} we obtain%
\begin{gather*}
\int_{\Omega }\varepsilon ^{q}\left( \left( u_{\varepsilon }\left(
x,T\right) \right) ^{2}-\left( u_{0}\left( x\right) \right) ^{2}\right)
dx+2\int_{\Omega _{T}}a\left( \frac{x}{\varepsilon },\frac{t}{\varepsilon
^{r}}\right) \nabla u_{\varepsilon }\left( x,t\right) \cdot \nabla
u_{\varepsilon }\left( x,t\right) dxdt \\
=2\int_{\Omega _{T}}f\left( x,t\right) u_{\varepsilon }\left( x,t\right) dxdt
\end{gather*}%
or, equivalently,%
\begin{gather*}
\varepsilon ^{q}\left\Vert u_{\varepsilon }\left( \cdot ,T\right)
\right\Vert _{L^{2}(\Omega )}^{2}+2\int_{\Omega _{T}}a\left( \frac{x}{%
\varepsilon },\frac{t}{\varepsilon ^{r}}\right) \nabla u_{\varepsilon
}\left( x,t\right) \cdot \nabla u_{\varepsilon }\left( x,t\right) dxdt \\
=\varepsilon ^{q}\left\Vert u_{0}\right\Vert _{L^{2}(\Omega
)}^{2}+2\int_{\Omega _{T}}f\left( x,t\right) u_{\varepsilon }\left(
x,t\right) dxdt\text{.}
\end{gather*}%
The coercivity condition (\ref{koercivitet}) states that%
\begin{gather*}
2\int_{\Omega _{T}}a\left( \frac{x}{\varepsilon },\frac{t}{\varepsilon ^{r}}%
\right) \nabla u_{\varepsilon }\left( x,t\right) \cdot \nabla u_{\varepsilon
}\left( x,t\right) dxdt \\
\geq 2C_{0}\int_{\Omega _{T}}\left\vert \nabla u_{\varepsilon }\right\vert
^{2}dxdt=2C_{0}\left\Vert u_{\varepsilon }\right\Vert
_{L^{2}(0,T;H_{0}^{1}(\Omega ))}^{2}\text{,}
\end{gather*}%
which gives us%
\begin{gather}
\varepsilon ^{q}\left\Vert u_{\varepsilon }\left( \cdot ,T\right)
\right\Vert _{L^{2}(\Omega )}^{2}+2C_{0}\left\Vert u_{\varepsilon
}\right\Vert _{L^{2}(0,T;H_{0}^{1}(\Omega ))}^{2}  \label{bevis1} \\
\leq \varepsilon ^{q}\left\Vert u_{0}\right\Vert _{L^{2}(\Omega
)}^{2}+2\int_{\Omega _{T}}f\left( x,t\right) u_{\varepsilon }\left(
x,t\right) dxdt\text{.}  \notag
\end{gather}%
Using the\ Poincar\'{e} inequality%
\begin{equation*}
\left\Vert u_{\varepsilon }\right\Vert _{L^{2}(\Omega _{T})}^{2}\leq
C_{2}\left\Vert u_{\varepsilon }\right\Vert _{L^{2}(0,T;H_{0}^{1}(\Omega
))}^{2}\text{,}
\end{equation*}%
where $C_{2}>0$ depends only on $\Omega $, and the elementary inequality%
\begin{equation*}
2xy\leq C_{1}x^{2}+C_{1}^{-1}y^{2}\text{,}
\end{equation*}%
with $C_{1}=C_{0}^{-1}C_{2}$, we have%
\begin{gather*}
2\int_{\Omega _{T}}f\left( x,t\right) u_{\varepsilon }\left( x,t\right)
dxdt\leq C_{0}^{-1}C_{2}\left\Vert f\right\Vert _{L^{2}(\Omega
_{T})}^{2}+\left( C_{0}^{-1}C_{2}\right) ^{-1}\left\Vert u_{\varepsilon
}\right\Vert _{L^{2}\left( \Omega _{T}\right) }^{2} \\
\leq C_{0}^{-1}C_{2}\left\Vert f\right\Vert _{L^{2}(\Omega
_{T})}^{2}+C_{0}C_{2}^{-1}C_{2}\left\Vert u_{\varepsilon }\right\Vert
_{L^{2}(0,T;H_{0}^{1}(\Omega ))}^{2} \\
=C_{0}^{-1}C_{2}\left\Vert f\right\Vert _{L^{2}(\Omega
_{T})}^{2}+C_{0}\left\Vert u_{\varepsilon }\right\Vert
_{L^{2}(0,T;H_{0}^{1}(\Omega ))}^{2}\text{.}
\end{gather*}%
Now (\ref{bevis1}) becomes%
\begin{gather*}
\varepsilon ^{q}\left\Vert u_{\varepsilon }\left( \cdot ,T\right)
\right\Vert _{L^{2}(\Omega )}^{2}+2C_{0}\left\Vert u_{\varepsilon
}\right\Vert _{L^{2}(0,T;H_{0}^{1}(\Omega ))}^{2} \\
\leq \varepsilon ^{q}\left\Vert u_{0}\right\Vert _{L^{2}(\Omega
)}^{2}+C_{0}^{-1}C_{2}\left\Vert f\right\Vert _{L^{2}(\Omega
_{T})}^{2}+C_{0}\left\Vert u_{\varepsilon }\right\Vert
_{L^{2}(0,T;H_{0}^{1}(\Omega ))}^{2}
\end{gather*}%
or, rewriting,%
\begin{equation*}
\left\Vert u_{\varepsilon }\right\Vert _{L^{2}(0,T;H_{0}^{1}(\Omega
))}^{2}\leq \varepsilon ^{q}C_{0}^{-1}\left\Vert u_{0}\right\Vert
_{L^{2}(\Omega )}^{2}+C_{0}^{-2}C_{2}\left\Vert f\right\Vert _{L^{2}(\Omega
_{T})}^{2}-\varepsilon ^{q}C_{0}^{-1}\left\Vert u_{\varepsilon }\left( \cdot
,T\right) \right\Vert _{L^{2}(\Omega )}^{2}\text{.}
\end{equation*}%
Noting that%
\begin{equation*}
\varepsilon ^{q}C_{0}^{-1}\left\Vert u_{\varepsilon }\left( \cdot ,T\right)
\right\Vert _{L^{2}(\Omega )}^{2}\geq 0
\end{equation*}%
we arrive at 
\begin{equation*}
\left\Vert u_{\varepsilon }\right\Vert _{L^{2}(0,T;H_{0}^{1}(\Omega
))}^{2}\leq \varepsilon ^{q}C_{0}^{-1}\left\Vert u_{0}\right\Vert
_{L^{2}(\Omega )}^{2}+C_{0}^{-2}C_{2}\left\Vert f\right\Vert _{L^{2}(\Omega
_{T})}^{2}\text{,}
\end{equation*}%
which implies (\ref{a priori villkor}), i.e. we have shown that $\left\{
u_{\varepsilon }\right\} $ is bounded in $L^{2}(0,T;H_{0}^{1}(\Omega ))$.

Before we are ready to give the homogenization result we prove that the
assumption used in Theorems \ref{Theorem-gradsplit} and \ref{Theorem-very
weak} is satisfied, i.e. that%
\begin{equation}
\lim_{\varepsilon \rightarrow 0}\int_{\Omega _{T}}u_{\varepsilon }\left(
x,t\right) v_{1}\left( x\right) \partial _{t}\left( \varepsilon
^{r}c_{1}\left( t\right) c_{2}\left( \frac{t}{\varepsilon ^{r}}\right)
\right) dxdt=0  \label{villkor3}
\end{equation}%
for $v_{1}\in D(\Omega )$, $c_{1}\in D(0,T)$ and $c_{2}\in C_{\sharp
}^{\infty }(S)$ and $r>0$. By using the weak form (\ref{svag form}) with the
test function%
\begin{equation*}
v\left( x\right) c\left( t\right) =\varepsilon ^{r-q}v_{1}\left( x\right)
c_{1}\left( t\right) c_{2}\left( \frac{t}{\varepsilon ^{r}}\right) \text{,}
\end{equation*}%
where $0<q<r$, $v_{1}\in D(\Omega )$, $c_{1}\in D(0,T)$ and $c_{2}\in
C_{\sharp }^{\infty }(S)$, we get%
\begin{gather*}
\int_{\Omega _{T}}-\varepsilon ^{q}u_{\varepsilon }\left( x,t\right)
\varepsilon ^{r-q}v_{1}\left( x\right) \partial _{t}\left( c_{1}\left(
t\right) c_{2}\left( \frac{t}{\varepsilon ^{r}}\right) \right) dxdt \\
+\int_{\Omega _{T}}a\left( \frac{x}{\varepsilon },\frac{t}{\varepsilon ^{r}}%
\right) \nabla u_{\varepsilon }\left( x,t\right) \cdot \varepsilon
^{r-q}\nabla v_{1}\left( x\right) c_{1}\left( t\right) c_{2}\left( \frac{t}{%
\varepsilon ^{r}}\right) dxdt \\
=\int_{\Omega _{T}}f\left( x,t\right) \varepsilon ^{r-q}v_{1}\left( x\right)
c_{1}\left( t\right) c_{2}\left( \frac{t}{\varepsilon ^{r}}\right) dxdt
\end{gather*}%
and by rearranging we obtain%
\begin{gather*}
\int_{\Omega _{T}}u_{\varepsilon }\left( x,t\right) v_{1}\left( x\right)
\partial _{t}\left( \varepsilon ^{r}c_{1}\left( t\right) c_{2}\left( \frac{t%
}{\varepsilon ^{r}}\right) \right) dxdt \\
=\int_{\Omega _{T}}\varepsilon ^{r-q}a\left( \frac{x}{\varepsilon },\frac{t}{%
\varepsilon ^{r}}\right) \nabla u_{\varepsilon }\left( x,t\right) \cdot
\nabla v_{1}\left( x\right) c_{1}\left( t\right) c_{2}\left( \frac{t}{%
\varepsilon ^{r}}\right) dxdt \\
-\int_{\Omega _{T}}\varepsilon ^{r-q}f\left( x,t\right) v_{1}\left( x\right)
c_{1}\left( t\right) c_{2}\left( \frac{t}{\varepsilon ^{r}}\right) dxdt\text{%
.}
\end{gather*}%
From (\ref{a priori villkor}) we know that $\left\{ u_{\varepsilon }\right\} 
$ is bounded in $L^{2}(0,T;H_{0}^{1}(\Omega ))$ and therefore $\left\{
\nabla u_{\varepsilon }\right\} $ is bounded in $L^{2}(\Omega _{T})^{N}$ and
we have%
\begin{gather*}
\lim_{\varepsilon \rightarrow 0}\int_{\Omega _{T}}u_{\varepsilon }\left(
x,t\right) v_{1}\left( x\right) \partial _{t}\left( \varepsilon
^{r}c_{1}\left( t\right) c_{2}\left( \frac{t}{\varepsilon ^{r}}\right)
\right) dxdt \\
=\lim_{\varepsilon \rightarrow 0}\left( \int_{\Omega _{T}}\varepsilon
^{r-q}a\left( \frac{x}{\varepsilon },\frac{t}{\varepsilon ^{r}}\right)
\nabla u_{\varepsilon }\left( x,t\right) \cdot \nabla v_{1}\left( x\right)
c_{1}\left( t\right) c_{2}\left( \frac{t}{\varepsilon ^{r}}\right)
dxdt\right. \\
-\left. \int_{\Omega _{T}}\varepsilon ^{r-q}f\left( x,t\right) v_{1}\left(
x\right) c_{1}\left( t\right) c_{2}\left( \frac{t}{\varepsilon ^{r}}\right)
dxdt\right) =0\text{,}
\end{gather*}%
meaning that (\ref{villkor3}) is fulfilled.

Finally, we give the homogenization result.

\begin{theorem}
\label{Theorem-homogenisering}Let $\left\{ u_{\varepsilon }\right\} $ be a
sequence of solutions to (\ref{ekvation2}) in $W^{1,2}(0,T;H_{0}^{1}(\Omega
),L^{2}(\Omega ))$. Then it holds that%
\begin{equation}
u_{\varepsilon }\left( x,t\right) \rightharpoonup u\left( x,t\right) \text{
in }L^{2}(0,T;H_{0}^{1}(\Omega ))  \label{svag}
\end{equation}%
and%
\begin{equation}
\nabla u_{\varepsilon }\left( x,t\right) \overset{2,2}{\rightharpoonup }%
\nabla u\left( x,t\right) +\nabla _{y}u_{1}\left( x,t,y,s\right) \text{,}
\label{gradsplit2}
\end{equation}%
where $u\in $ $L^{2}(0,T;H_{0}^{1}(\Omega ))$ and $u_{1}\in L^{2}(\Omega
_{T}\times S;H_{\sharp }^{1}(Y)/%
\mathbb{R}
)$. Here, $u$ is the unique solution to the homogenized problem%
\begin{eqnarray}
-\nabla \cdot \left( b\nabla u\left( x,t\right) \right) &=&f\left(
x,t\right) \text{ in }\Omega _{T}\text{,}  \label{homogeniserat problem} \\
u\left( x,t\right) &=&0\text{ on }\partial \Omega \times \left( 0,T\right) 
\notag
\end{eqnarray}%
with%
\begin{equation}
b\nabla u\left( x,t\right) =\int_{\mathcal{Y}_{1,1}}a\left( y,s\right)
\left( \nabla u\left( x,t\right) +\nabla _{y}u_{1}\left( x,t,y,s\right)
\right) dyds\text{.}  \label{koefficient}
\end{equation}%
For $q<r<q+2$, $u_{1}$ is determined by the elliptic local problem%
\begin{equation}
-\nabla _{y}\cdot \left( a\left( y,s\right) \cdot \left( \nabla u\left(
x,t\right) +\nabla _{y}u_{1}\left( x,t,y,s\right) \right) \right) =0
\label{lokalt problem1}
\end{equation}%
and for $r=q+2$ by the parabolic local problem%
\begin{equation}
\partial _{s}u_{1}\left( x,t,y,s\right) -\nabla _{y}\cdot \left( a\left(
y,s\right) \left( \nabla u\left( x,t\right) +\nabla _{y}u_{1}\left(
x,t,y,s\right) \right) \right) =0\text{.}  \label{lokalt problem2}
\end{equation}%
For $r>q+2$, $u_{1}$ is determined by the elliptic local problem%
\begin{equation}
-\nabla _{y}\cdot \left( \left( \int_{S}a\left( y,s\right) ds\right) \cdot
\left( \nabla u\left( x,t\right) +\nabla _{y}u_{1}\left( x,t,y\right)
\right) \right) =0  \label{lokalt problem3}
\end{equation}%
and since $u_{1}$ is independent of $s$, the coefficient (\ref{koefficient})
can, in this case, be expressed as%
\begin{equation*}
b\nabla u\left( x,t\right) =\int_{Y}\left( \int_{S}a\left( y,s\right)
ds\right) \left( \nabla u\left( x,t\right) +\nabla _{y}u_{1}\left(
x,t,y\right) \right) dy\text{.}
\end{equation*}
\end{theorem}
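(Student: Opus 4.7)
The plan is to leverage the a priori bound (\ref{a priori villkor}) together with the condition (\ref{villkor3}), both already verified above, to invoke Theorem~\ref{Theorem-gradsplit} and Theorem~\ref{Theorem-very weak} for $n=m=1$, $\varepsilon_{1}=\varepsilon$, $\varepsilon_{1}^{\prime}=\varepsilon^{r}$, and thereby extract a single subsequence along which $u_{\varepsilon}\rightharpoonup u$ in $L^{2}(0,T;H_{0}^{1}(\Omega))$, $\nabla u_{\varepsilon}\overset{2,2}{\rightharpoonup}\nabla u+\nabla_{y}u_{1}$, and $\varepsilon^{-1}u_{\varepsilon}\overset{2,2}{\underset{vw}{\rightharpoonup}}u_{1}$ with the same $u_{1}\in L^{2}(\Omega_{T}\times S;H_{\sharp}^{1}(Y)/\mathbb{R})$. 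I then identify the limit equations by passing to the limit in the weak form (\ref{svag form}) against two families of test functions. For the homogenized equation I use non-oscillating $v(x)c(t)$ with $v\in D(\Omega)$, $c\in D(0,T)$: the time derivative term carries the prefactor $\varepsilon^{q}$ and vanishes because $\{u_{\varepsilon}\}$ is $L^{2}$-bounded, while the gradient term, rewritten as $\int_{\Omega_{T}}\nabla u_{\varepsilon}\cdot[a^{T}(x/\varepsilon,t/\varepsilon^{r})\nabla v(x)c(t)]\,dxdt$, converges by (\ref{gradsplit}) applied to the admissible test field $a^{T}(y,s)\nabla v(x)c(t)$, yielding (\ref{homogeniserat problem}) with $b$ as in (\ref{koefficient}); the Dirichlet condition is inherited from $u\in L^{2}(0,T;H_{0}^{1}(\Omega))$.

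For the local problems I plug in the oscillating test functions $v(x)=\varepsilon v_{1}(x)v_{2}(x/\varepsilon)$ and $c(t)=c_{1}(t)c_{2}(t/\varepsilon^{r})$ with $v_{1}\in D(\Omega)$, $v_{2}\in C_{\sharp}^{\infty}(Y)/\mathbb{R}$, $c_{1}\in D(0,T)$, $c_{2}\in C_{\sharp}^{\infty}(S)$. Expanding derivatives produces an $O(\varepsilon^{q+1})$ contribution from $\partial_{t}c_{1}\cdot c_{2}$ and an $O(\varepsilon)$ contribution from $\nabla v_{1}\cdot v_{2}$, both negligible, together with the pivotal term $-\varepsilon^{q+1-r}\int_{\Omega_{T}}u_{\varepsilon}v_{1}v_{2}(x/\varepsilon)c_{1}\partial_{s}c_{2}\,dxdt$ and the surviving gradient piece $\int_{\Omega_{T}}a(x/\varepsilon,t/\varepsilon^{r})\nabla u_{\varepsilon}\cdot v_{1}\nabla_{y}v_{2}(x/\varepsilon)c_{1}c_{2}\,dxdt$. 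Writing the pivotal term as $\varepsilon^{q+2-r}\cdot[\varepsilon^{-1}\int_{\Omega_{T}}u_{\varepsilon}v_{1}v_{2}(x/\varepsilon)c_{1}\partial_{s}c_{2}\,dxdt]$ lets Theorem~\ref{Theorem-very weak} convert the bracket to the finite limit $\int_{\Omega_{T}\times\mathcal{Y}_{1,1}}u_{1}v_{1}v_{2}c_{1}\partial_{s}c_{2}\,dydsdxdt$, while the gradient piece converges by (\ref{gradsplit}) applied to $a^{T}(y,s)v_{1}(x)\nabla_{y}v_{2}(y)c_{1}(t)c_{2}(s)$, and the right-hand side $\varepsilon\int fv_{1}v_{2}c_{1}c_{2}$ vanishes.

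The three regimes then separate according to the asymptotics of $\varepsilon^{q+2-r}$. For $q<r<q+2$ this factor vanishes, leaving only the gradient contribution; a density and localization argument in $(x,t,y,s)$ followed by integration by parts in $y$ yields (\ref{lokalt problem1}). For $r=q+2$ the factor equals one, and integration by parts in $s$, legitimate by the $S$-periodicity of $c_{2}$ and $u_{1}$, combines the two surviving terms into (\ref{lokalt problem2}). For $r>q+2$ the factor blows up, so the only way the identity can stay bounded is that $\int_{\Omega_{T}\times\mathcal{Y}_{1,1}}u_{1}v_{1}v_{2}c_{1}\partial_{s}c_{2}\,dydsdxdt=0$ for every admissible test function; combined with the zero $Y$-mean of $u_{1}$ this forces $\partial_{s}u_{1}=0$, hence $u_{1}=u_{1}(x,t,y)$, whereupon the $s$-integration in the gradient contribution collapses the coefficient to $\int_{S}a(y,s)\,ds$ and yields (\ref{lokalt problem3}). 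Uniqueness of $u_{1}$ in the appropriate quotient space follows from standard linear elliptic or parabolic theory on the torus, which makes $b$ well-defined and coercive (inherited from the coercivity of $a$); uniqueness of $u$ for (\ref{homogeniserat problem}) and convergence of the full sequence follow by the customary contradiction argument. The delicate step is the regime $r>q+2$, where the scaling argument must be executed carefully and one must verify that the zero-mean class of very weak test functions is rich enough to pin down $\partial_{s}u_{1}=0$ rather than merely $\partial_{s}u_{1}$ constant in $y$.
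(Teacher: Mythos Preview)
Your proposal follows the same strategy as the paper: invoke Theorems~\ref{Theorem-gradsplit} and~\ref{Theorem-very weak} via (\ref{a priori villkor}) and (\ref{villkor3}), pass to the limit in (\ref{svag form}) against non-oscillating test functions for the homogenized problem, and against $\varepsilon v_{1}(x)v_{2}(x/\varepsilon)c_{1}(t)c_{2}(t/\varepsilon^{r})$ for the local problems, splitting into the three regimes according to the sign of $q+2-r$. Cases $q<r<q+2$ and $r=q+2$ match the paper essentially verbatim.

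There is, however, a small gap in your treatment of Case~3 ($r>q+2$). Your blowup argument is correct and does force
\[
\int_{\Omega_{T}\times\mathcal{Y}_{1,1}}u_{1}\,v_{1}v_{2}c_{1}\partial_{s}c_{2}\,dydsdxdt=0,
\]
hence $\partial_{s}u_{1}=0$. But from this same test function you \emph{cannot} conclude that the gradient contribution
\[
L_{D}=\int_{\Omega_{T}\times\mathcal{Y}_{1,1}}a(y,s)\bigl(\nabla u+\nabla_{y}u_{1}\bigr)\cdot v_{1}\nabla_{y}v_{2}\,c_{1}c_{2}\,dydsdxdt
\]
vanishes. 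The limit identity reads $B_{\mathrm{lim}}+L_{D}=0$, where $B_{\mathrm{lim}}=-\lim_{\varepsilon\to 0}\varepsilon^{q+2-r}I_{\varepsilon}$ is an indeterminate $\infty\cdot 0$ form once you only know $I_{\varepsilon}\to 0$; nothing prevents $L_{D}\neq 0$. To close the argument you must go back with $c_{2}\equiv 1$ (so that the $B$-term vanishes identically), which then gives $L_{D}=0$ and, using the already established $s$-independence of $u_{1}$, yields (\ref{lokalt problem3}). The paper carries this out in two explicit steps: first the rescaled test function $\varepsilon^{r-q-1}v_{1}v_{2}(x/\varepsilon)c_{1}c_{2}(t/\varepsilon^{r})$ (your choice multiplied by $\varepsilon^{r-q-2}$, which makes the $D$-term vanish and the $B$-term finite) to isolate $s$-independence cleanly, and then the $c_{2}$-free test function $\varepsilon v_{1}v_{2}(x/\varepsilon)c_{1}$ to derive the local problem.
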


\begin{proof}
Since (\ref{a priori villkor}) and (\ref{villkor3}) are satisfied the
convergences (\ref{svag}) and (\ref{gradsplit2}) holds, according to Theorem %
\ref{Theorem-gradsplit}. To obtain the homogenized problem we choose, in the
weak form (\ref{svag form}), the test function%
\begin{equation*}
v\left( x\right) c\left( t\right) =v_{1}\left( x\right) c_{1}\left( t\right) 
\text{,}
\end{equation*}%
where $v_{1}\in H_{0}^{1}(\Omega )$ and $c_{1}\in D(0,T)$, giving%
\begin{gather*}
\int_{\Omega _{T}}-\varepsilon ^{q}u_{\varepsilon }\left( x,t\right)
v_{1}\left( x\right) \partial _{t}c_{1}\left( t\right) +a\left( \frac{x}{%
\varepsilon },\frac{t}{\varepsilon ^{r}}\right) \nabla u_{\varepsilon
}\left( x,t\right) \cdot \nabla v_{1}\left( x\right) c_{1}\left( t\right)
dxdt \\
=\int_{\Omega _{T}}f\left( x,t\right) v_{1}\left( x\right) c_{1}\left(
t\right) dxdt\text{.}
\end{gather*}%
Letting $\varepsilon $ tend to zero we have%
\begin{gather*}
\int_{\Omega _{T}}\int_{\mathcal{Y}_{1,1}}a\left( y,s\right) \left( \nabla
u\left( x,t\right) +\nabla _{y}u_{1}\left( x,t,y,s\right) \right) \cdot
\nabla v_{1}\left( x\right) c_{1}\left( t\right) dydsdxdt \\
=\int_{\Omega _{T}}f\left( x,t\right) v_{1}\left( x\right) c_{1}\left(
t\right) dxdt\text{,}
\end{gather*}%
and by the Variational lemma we arrive at%
\begin{gather*}
\int_{\Omega }\int_{\mathcal{Y}_{1,1}}a\left( y,s\right) \left( \nabla
u\left( x,t\right) +\nabla _{y}u_{1}\left( x,t,y,s\right) \right) \cdot
\nabla v_{1}\left( x\right) dydsdx \\
=\int_{\Omega }f\left( x,t\right) v_{1}\left( x\right) dx
\end{gather*}%
a.e. in $\left( 0,T\right) $, which is the weak form of (\ref{homogeniserat
problem}).

Now we continue by finding the local problem for each of the three cases.

\textit{Case 1: }$0<q<r<q+2$\textit{.} In (\ref{svag form}) we choose the
test function%
\begin{equation*}
v\left( x\right) c\left( t\right) =\varepsilon v_{1}\left( x\right)
v_{2}\left( \frac{x}{\varepsilon }\right) c_{1}\left( t\right) c_{2}\left( 
\frac{t}{\varepsilon ^{r}}\right)
\end{equation*}%
where $v_{1}\in D(\Omega )$, $v_{2}\in C_{\sharp }^{\infty }(Y)/%
\mathbb{R}
$, $c_{1}\in D(0,T)$ and $c_{2}\in C_{\sharp }^{\infty }(S)$ and obtain,
after differentiations%
\begin{gather*}
\int_{\Omega _{T}}-\varepsilon ^{q+1}u_{\varepsilon }\left( x,t\right)
v_{1}\left( x\right) v_{2}\left( \frac{x}{\varepsilon }\right) \partial
_{t}c_{1}\left( t\right) c_{2}\left( \frac{t}{\varepsilon ^{r}}\right) dxdt
\\
-\int_{\Omega _{T}}\varepsilon ^{q+1-r}u_{\varepsilon }\left( x,t\right)
v_{1}\left( x\right) v_{2}\left( \frac{x}{\varepsilon }\right) c_{1}\left(
t\right) \partial _{s}c_{2}\left( \frac{t}{\varepsilon ^{r}}\right) dxdt \\
+\int_{\Omega _{T}}\varepsilon a\left( \frac{x}{\varepsilon },\frac{t}{%
\varepsilon ^{r}}\right) \nabla u_{\varepsilon }\left( x,t\right) \cdot
\nabla v_{1}\left( x\right) v_{2}\left( \frac{x}{\varepsilon }\right)
c_{1}\left( t\right) c_{2}\left( \frac{t}{\varepsilon ^{r}}\right) dxdt \\
+\int_{\Omega _{T}}a\left( \frac{x}{\varepsilon },\frac{t}{\varepsilon ^{r}}%
\right) \nabla u_{\varepsilon }\left( x,t\right) v_{1}\left( x\right) \cdot
\nabla _{y}v_{2}\left( \frac{x}{\varepsilon }\right) c_{1}\left( t\right)
c_{2}\left( \frac{t}{\varepsilon ^{r}}\right) dxdt \\
=\int_{\Omega _{T}}\varepsilon f\left( x,t\right) v_{1}\left( x\right)
v_{2}\left( \frac{x}{\varepsilon }\right) c_{1}\left( t\right) c_{2}\left( 
\frac{t}{\varepsilon ^{r}}\right) dxdt\text{.}
\end{gather*}%
Letting $\varepsilon \rightarrow 0$, omitting terms that equal zero, we
obtain%
\begin{gather}
\lim_{\varepsilon \rightarrow 0}\left( \int_{\Omega _{T}}-\varepsilon
^{q+1-r}u_{\varepsilon }\left( x,t\right) v_{1}\left( x\right) v_{2}\left( 
\frac{x}{\varepsilon }\right) c_{1}\left( t\right) \partial _{s}c_{2}\left( 
\frac{t}{\varepsilon ^{r}}\right) dxdt\right.  \label{lokalt problem bevis}
\\
+\left. \int_{\Omega _{T}}a\left( \frac{x}{\varepsilon },\frac{t}{%
\varepsilon ^{r}}\right) \nabla u_{\varepsilon }\left( x,t\right)
v_{1}\left( x\right) \cdot \nabla _{y}v_{2}\left( \frac{x}{\varepsilon }%
\right) c_{1}\left( t\right) c_{2}\left( \frac{t}{\varepsilon ^{r}}\right)
dxdt\right) =0\text{.}  \notag
\end{gather}%
Using the fact that $r<q+2$ and by observing that $\varepsilon
^{q+1-r}=\varepsilon ^{q+2-r}\cdot \varepsilon ^{-1}$ the first term
vanishes due to Theorem \ref{Theorem-very weak} and then Theorem \ref%
{Theorem-gradsplit} gives%
\begin{gather*}
\int_{\Omega _{T}}\int_{\mathcal{Y}_{1,1}}a\left( y,s\right) \left( \nabla
u\left( x,t\right) +\nabla _{y}u_{1}\left( x,t,y,s\right) \right) \\
\times v_{1}\left( x\right) \cdot \nabla _{y}v_{2}\left( y\right)
c_{1}\left( t\right) c_{2}\left( s\right) dydsdxdt=0\text{.}
\end{gather*}%
The Variational lemma yields%
\begin{equation*}
\int_{Y}a\left( y,s\right) \left( \nabla u\left( x,t\right) +\nabla
_{y}u_{1}\left( x,t,y,s\right) \right) \cdot \nabla _{y}v_{2}\left( y\right)
dy=0
\end{equation*}%
a.e. in $\Omega _{T}\times S$ which is the weak form of (\ref{lokalt
problem1}).

\textit{Case 2: }$r=q+2$. Using the same test functions as in case 1 we
arrive at (\ref{lokalt problem bevis}). According to Theorems \ref%
{Theorem-gradsplit} and \ref{Theorem-very weak}, since $r=q+2$, we have%
\begin{gather*}
\int_{\Omega _{T}}\int_{\mathcal{Y}_{1,1}}-u_{1}(x,t,y,s)v_{1}\left(
x\right) v_{2}\left( y\right) c_{1}(t)\partial _{s}c_{2}(s)dydsdxdt \\
+\int_{\Omega _{T}}\int_{\mathcal{Y}_{1,1}}a(y,s)\left( \nabla u\left(
x,t\right) +\nabla _{y}u_{1}\left( x,t,y,s\right) \right) \\
\times v_{1}\left( x\right) \cdot \nabla _{y}v_{2}(y)c_{1}\left( t\right)
c_{2}(s)dyds=0\text{.}
\end{gather*}%
By applying the Variational lemma we get%
\begin{gather*}
\int_{\mathcal{Y}_{1,1}}-u_{1}(x,t,y,s)v_{2}(y)\partial _{s}c_{2}(s)dyds \\
+\int_{\mathcal{Y}_{1,1}}a(y,s)\left( \nabla u\left( x,t\right) +\nabla
_{y}u_{1}\left( x,t,y,s\right) \right) \cdot \nabla
_{y}v_{2}(y)c_{2}(s)dyds=0
\end{gather*}%
a.e. in $\Omega _{T}$, which is the weak form of (\ref{lokalt problem2}).

\textit{Case 3: }$r>q+2$\textit{.} Before deriving the local problem for
this case we establish the independence of $s$ in $u_{1}$. By choosing the
test function%
\begin{equation*}
v\left( x\right) c\left( t\right) =\varepsilon ^{r-q-1}v_{1}\left( x\right)
v_{2}\left( \frac{x}{\varepsilon }\right) c_{1}\left( t\right) c_{2}\left( 
\frac{t}{\varepsilon ^{r}}\right) \text{,}
\end{equation*}%
where $v_{1}\in D(\Omega )$, $v_{2}\in C_{\sharp }^{\infty }(Y)/%
\mathbb{R}
$, $c_{1}\in D(0,T)$ and $c_{2}\in C_{\sharp }^{\infty }(S)$, the weak form,
after differentiation, becomes%
\begin{gather*}
\int_{\Omega _{T}}-\varepsilon ^{r-1}u_{\varepsilon }\left( x,t\right)
v_{1}\left( x\right) v_{2}\left( \frac{x}{\varepsilon }\right) \partial
_{t}c_{1}\left( t\right) c_{2}\left( \frac{t}{\varepsilon ^{r}}\right) dxdt
\\
-\int_{\Omega _{T}}\varepsilon ^{-1}u_{\varepsilon }\left( x,t\right)
v_{1}\left( x\right) v_{2}\left( \frac{x}{\varepsilon }\right) c_{1}\left(
t\right) \partial _{s}c_{2}\left( \frac{t}{\varepsilon ^{r}}\right) dxdt \\
+\int_{\Omega _{T}}\varepsilon ^{r-q-1}a\left( \frac{x}{\varepsilon },\frac{t%
}{\varepsilon ^{r}}\right) \nabla u_{\varepsilon }\left( x,t\right) \cdot
\nabla v_{1}\left( x\right) v_{2}\left( \frac{x}{\varepsilon }\right)
c_{1}\left( t\right) c_{2}\left( \frac{t}{\varepsilon ^{r}}\right) dxdt \\
+\int_{\Omega _{T}}\varepsilon ^{r-q-2}a\left( \frac{x}{\varepsilon },\frac{t%
}{\varepsilon ^{r}}\right) \nabla u_{\varepsilon }\left( x,t\right)
v_{1}\left( x\right) \cdot \nabla _{y}v_{2}\left( \frac{x}{\varepsilon }%
\right) c_{1}\left( t\right) c_{2}\left( \frac{t}{\varepsilon ^{r}}\right)
dxdt \\
=\int_{\Omega _{T}}\varepsilon ^{r-q-1}f\left( x,t\right) v_{1}\left(
x\right) v_{2}\left( \frac{x}{\varepsilon }\right) c_{1}\left( t\right)
c_{2}\left( \frac{t}{\varepsilon ^{r}}\right) dxdt\text{.}
\end{gather*}%
Since $r>q+2$ all terms but the second one vanish as $\varepsilon
\rightarrow 0$. Due to Theorem \ref{Theorem-very weak} we have%
\begin{equation*}
\int_{\Omega _{T}}\int_{\mathcal{Y}_{1,1}}-u_{1}\left( x,t,y,s\right)
v_{1}\left( x\right) v_{2}\left( y\right) c_{1}\left( t\right) \partial
_{s}c_{2}\left( s\right) dydsdxdt=0
\end{equation*}%
and applying the Variational lemma we get%
\begin{equation*}
\int_{S}-u_{1}\left( x,t,y,s\right) \partial _{s}c_{2}\left( s\right) ds=0
\end{equation*}%
a.e. in $\Omega _{T}\times Y$, which implies that $u_{1}$ is independent of $%
s$. Now, to find the local problem, we choose the test function%
\begin{equation*}
v\left( x\right) c\left( t\right) =\varepsilon v_{1}\left( x\right)
v_{2}\left( \frac{x}{\varepsilon }\right) c_{1}\left( t\right) \text{,}
\end{equation*}%
where $v_{1}\in D(\Omega )$, $v_{2}\in C_{\sharp }^{\infty }(Y)/%
\mathbb{R}
$ and $c_{1}\in D(0,T)$. Carrying out differentiations, the weak form (\ref%
{svag form}) becomes%
\begin{gather*}
\int_{\Omega _{T}}-\varepsilon ^{q+1}u_{\varepsilon }\left( x,t\right)
v_{1}\left( x\right) v_{2}\left( \frac{x}{\varepsilon }\right) \partial
_{t}c_{1}\left( t\right) dxdt \\
+\int_{\Omega _{T}}\varepsilon a\left( \frac{x}{\varepsilon },\frac{t}{%
\varepsilon ^{r}}\right) \nabla u_{\varepsilon }\left( x,t\right) \cdot
\nabla v_{1}\left( x\right) v_{2}\left( \frac{x}{\varepsilon }\right)
c_{1}\left( t\right) dxdt \\
+\int_{\Omega _{T}}a\left( \frac{x}{\varepsilon },\frac{t}{\varepsilon ^{r}}%
\right) \nabla u_{\varepsilon }\left( x,t\right) v_{1}\left( x\right) \cdot
\nabla _{y}v_{2}\left( \frac{x}{\varepsilon }\right) c_{1}\left( t\right)
dxdt \\
=\int_{\Omega _{T}}\varepsilon f\left( x,t\right) v_{1}\left( x\right)
v_{2}\left( \frac{x}{\varepsilon }\right) c_{1}\left( t\right) dxdt\text{.}
\end{gather*}%
Theorem \ref{Theorem-gradsplit} and the fact that $u_{1}$ is independent of $%
s$ gives%
\begin{equation*}
\int_{\Omega _{T}}\int_{\mathcal{Y}_{1,1}}a\left( y,s\right) \left( \nabla
u\left( x,t\right) +\nabla _{y}u_{1}\left( x,t,y\right) \right) v_{1}\left(
x\right) \cdot \nabla _{y}v_{2}\left( y\right) c_{1}\left( t\right)
dydsdxdt=0
\end{equation*}%
and by the Variational lemma we have%
\begin{equation*}
\int_{Y}\left( \int_{S}a\left( y,s\right) ds\right) \left( \nabla u\left(
x,t\right) +\nabla _{y}u_{1}\left( x,t,y\right) \right) \cdot \nabla
_{y}v_{2}\left( y\right) dy=0
\end{equation*}%
a.e. in $\Omega _{T}$, which is the weak form of (\ref{lokalt problem3}).
\end{proof}

\end{document}